\newtheorem{theorem}{Theorem}[section]
\newtheorem{lemma}[theorem]{Lemma}
\newtheorem{corollary}[theorem]{Corollary}
\newtheorem{proposition}[theorem]{Proposition}
\newtheorem{varexample}[theorem]{Example}
\newenvironment{example}{\begin{varexample}
\begin{normalfont}}{\end{normalfont}
\end{varexample}}
\theoremstyle{definition}
\newtheorem{definition}[theorem]{Definition}
\definecolor{gold}{rgb}{1.0,0.75,0.0}
\title{A New Lower Bound on Graph Gonality}
\author{Michael Harp}
\author{Elijah Jackson}
\author{David Jensen}
\author{Noah Speeter}
\begin{document}

\maketitle

\begin{abstract}
We define a new graph invariant called the scramble number.  We show that the scramble number of a graph is a lower bound for the gonality and an upper bound for the treewidth.  Unlike the treewdith, the scramble number is not minor monotone, but it is subgraph monotone and invariant under subdivision.  We compute the scramble number and gonality of several families of graphs for which these invariants are strictly greater than the treewidth. 
\end{abstract}

\section{Introduction}

In \cite{BakerNorine}, Baker and Norine introduce the theory of divisors on graphs, and in \cite{Baker08}, Baker defines a new graph invariant known as the \emph{gonality}.  Due to its connection to algebraic geometry, this invariant has received a great deal of interest \cite{dbG, NPHard, RandomGonality, ExpanderGonality, Hendrey, AidunMorrison, Morrison}.  Computing the gonality $\mathrm{gon}(G)$ of a graph $G$ is NP-hard \cite{NPHard}.  To find an upper bound, one only has to produce an example of a divisor with positive rank, but it is much more difficult to find lower bounds.  A significant step in this direction was obtained in \cite{dbG}, in which the authors show that the gonality of a graph $G$ is bounded below by a much-studied graph invariant known as the \emph{treewidth}, $\mathrm{tw}(G)$.

In this paper we define a new graph invariant, which we call the \emph{scramble number} of $G$ and denote $\mathrm{sn}(G)$.  We refer the reader to \S~\ref{Sec:Scramble} for a definition.  Our main result is the following.

\begin{theorem}
\label{Thm:MainThm}
For any graph $G$, we have
\[
\mathrm{tw} (G) \leq \mathrm{sn} (G) \leq \mathrm{gon} (G) .
\]
\end{theorem}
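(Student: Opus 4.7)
My plan is to prove the two inequalities independently, since they come from very different tools.

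\emph{For $\mathrm{tw}(G)\le\mathrm{sn}(G)$:} I would invoke the Seymour--Thomas bramble theorem, which equates $\mathrm{tw}(G)+1$ with the maximum order of a bramble, where a bramble is a family of pairwise touching connected subgraphs and its order is the minimum size of a vertex set hitting every egg. The key observation is that every bramble is, in particular, a scramble of the same order: because all pairs of eggs of a bramble touch, no two of them can be separated by a vertex cut disjoint from both, so the egg-connectivity is vacuously infinite and the scramble order collapses to the hitting number. An optimal bramble thus realizes a scramble of order $\mathrm{tw}(G)+1$, yielding the first inequality.

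\emph{For $\mathrm{sn}(G)\le\mathrm{gon}(G)$:} Let $D$ be a divisor of degree $d=\mathrm{gon}(G)$ with $r(D)\ge 1$, and let $\mathcal{S}$ be any scramble. I want to show that either the hitting number $h(\mathcal{S})$ or the egg-connectivity $e(\mathcal{S})$ is at most $d$. Assume both exceed $d$ and derive a contradiction. Since $|\mathrm{supp}(D)|\le d<h(\mathcal{S})$, some egg $E_1$ is disjoint from $\mathrm{supp}(D)$. Pick $v\in E_1$ and replace $D$ by its $v$-reduced representative $D_v$, which satisfies $D_v(v)\ge 1$ by positive rank. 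Reapplying the hitting bound to $D_v$ produces an egg $E_2$ disjoint from $\mathrm{supp}(D_v)$; note $E_2\neq E_1$ since $v\in E_1\cap\mathrm{supp}(D_v)$.

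The heart of the proof is then to extract, from this data, a vertex separator of size at most $d$ between $E_1$ and $E_2$, which will contradict $e(\mathcal{S})>d$. The natural tool is Dhar's burning algorithm: running it on $D_v$ from a vertex of $E_2$, the unburnt set $U$ must contain $v$ (since $v$-reducedness only forces a total burn when the fire originates at $v$), while the vertex adjacent to $U$ on the burnt side form a vertex separator whose size is controlled by the outgoing edge count, which in turn is bounded by $\deg(D_v)=d$.

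\emph{Main obstacle:} The delicate step is ensuring that this separator genuinely sits between the two designated eggs, i.e.\ that $E_1\subseteq U$ and $E_2$ is fully burnt. Since the eggs are connected and disjoint from the relevant supports, and since an egg cannot cross a Dhar boundary without hitting a chip, this should follow from choosing the starting vertex carefully inside $E_2$ and exploiting the structure of the unburnt set; but matching this edge-theoretic bound against the vertex-cut definition of $e(\mathcal{S})$ is where the bookkeeping will need the most care.
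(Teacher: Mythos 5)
Both halves of your plan contain genuine gaps. For $\mathrm{tw}(G)\le\mathrm{sn}(G)$: you have conflated brambles with \emph{strict} brambles and misread condition (2) of the definition of scramble order. That condition is about edge cuts $\vert E(A,A^c)\vert$ for sets $A$ with one egg inside $A$ and another inside $A^c$; it is not a statement about vertex cuts ``disjoint from both'' eggs. In a bramble the eggs need only touch (i.e.\ $E\cup E'$ is connected), so two disjoint touching eggs certainly admit such a set $A$, and condition (2) is not vacuous -- it is vacuous only for strict brambles. Your conclusion that a maximum bramble is a scramble of the same order would give $\mathrm{tw}(G)+1\le\mathrm{sn}(G)$, which is false: any tree with an edge has a bramble of order $2$ (two adjacent singleton eggs) but scramble number $1$, since taking $A$ to be one of the two vertices yields an edge cut of size $1$. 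The correct statement, which still suffices, is that a bramble of order $k$ has scramble order at least $k-1$ (this uses Lemma~3.3 of \cite{dbG} to bound the edge cut between two separated eggs), and then $\mathrm{tw}(G)=k-1\le\mathrm{sn}(G)$ by Seymour--Thomas.

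For $\mathrm{sn}(G)\le\mathrm{gon}(G)$: your outline follows the right general strategy (an unhit egg forces a reduction, and the degree should dominate an edge cut with whole eggs on both sides), but the ``delicate step'' you flag is exactly the missing idea, and a single Dhar run does not supply it. Nothing forces the unburnt set $U$ (burning from $q\in E_2$ on $D_v$) to contain $E_1$, or indeed any egg, entirely: $E_1$ is only disjoint from $\mathrm{Supp}(D)$, not from $\mathrm{Supp}(D_v)$, so its vertices other than $v$ may carry no chips and burn; and $v$-reducedness does not prevent the fire started at $q$ from burning $v$ as well (it only guarantees $U\ne\emptyset$, since a total burn would make $D_v$ $q$-reduced with $q\notin\mathrm{Supp}(D_v)$, contradicting positive rank). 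Also note the quantity you must beat is the edge cut $\vert E(U,U^c)\vert$, not a vertex separator, so no conversion step is needed -- but you do need an egg wholly inside $U$ and an egg wholly inside $U^c$. The paper closes this gap with an extra device: choose the effective representative $D$ that meets the maximum number of eggs, take the whole chain $\emptyset\subsetneq U_1\subseteq\cdots\subseteq U_k$ of sets fired in passing to the $v$-reduced divisor, and use maximality to find an index $i$ at which some egg $E'$ loses all its chips, whence $E'\subseteq U_i$ by \cite[Lemma~3.2]{dbG}, while the unhit egg $E$ gains chips only at a later step and hence lies in $U_i^c$; then $\mathrm{deg}(D_{i-1})\ge\sum_{u\in U_i}D_{i-1}(u)\ge\vert E(U_i,U_i^c)\vert\ge\vert\vert\mathscr{S}\vert\vert$. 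Without the maximality assumption (or an equivalent mechanism producing an egg contained in the fired set), your argument does not close.
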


Theorem~\ref{Thm:MainThm} is proved in two parts.  The left inequality is proved in Corollary~\ref{Cor:TW} and the right inequality in Theorem~\ref{Thm:Bound}.  The proof of Theorem~\ref{Thm:Bound} follows closely that of \cite{dbG}.  Indeed, the scramble number is defined in such a way as to generalize the statement of \cite[Theorem~3.1]{dbG} without significantly altering its proof. 

After establishing Theorem~\ref{Thm:MainThm}, we then examine properties of the scramble number.  We show that it is subgraph monotone (Proposition~\ref{Prop:Subgraph}) and invariant under subdivision (Proposition~\ref{Prop:Refinement}) but not minor montone (Example~\ref{Ex:Minor}).  In Examples~\ref{Ex:Path} and~\ref{Ex:COL}, we show that $\mathrm{sn} (G)$ is unbounded in $\mathrm{tw} (G)$, and $\mathrm{gon} (G)$ is unbounded in $\mathrm{sn} (G)$.

The fact that the treewidth bound can be strict is not new.  See, for example, \cite{Hendrey}, where it is shown that, for all integers $k \geq 2$ and $\ell \geq k$, there exists a $k$-connected graph of treewidth $k$ and gonality at least $\ell$.  The strength of Theorem~\ref{Thm:MainThm} is that it can be used to compute the gonality in cases where this bound is strict.  This is demonstrated in \S~\ref{Sec:Examples}, where we use Theorem~\ref{Thm:MainThm} to compute the gonality of several families of graphs.  In \cite{Morrison}, the authors compute the treewidth of the \emph{grid graphs} $G_{m,n}$, the \emph{stacked prisms} $Y_{m,n}$, and the \emph{toroidal grid graphs} $T_{m,n}$.  (See \S~\ref{Sec:Examples} for precise definitions of these graphs.)  Combining their results with the bound from \cite{dbG}, they compute the gonalities of all these graphs except for $T_{n,n}$, $T_{n+1,n}$, and $Y_{2n,n}$.  Using Theorem~\ref{Thm:MainThm}, we complete this project, computing the gonalities of the graphs in these families and some minor generalizations.  Even in cases where the gonality was already known, our constructions are quite a bit simpler than those that arise in computations of treewidth.  For this reason, we suspect that scrambles may be useful in further computations of graph gonality.

\textbf{Acknowledgments.}  This research was conducted as a project with the University of Kentucky Math Lab.  The third author was supported by NSF DMS-1601896.  We would like to thank Ralph Morrison for some discussions on this material.  We would also like to thank the anonymous referees for several helpful comments, including the suggestion to add Examples~\ref{Ex:Path} and~\ref{Ex:COL}.  

\section{Preliminiaries}

In this section, we fix notation and review some basic definitions.  For simplicity, we will assume throughout that all graphs are connected, possibly with multiple edges, but no loops.  Given a graph $G$, we write $V(G)$ for its vertex set and $E(G)$ for its edge set.  If $A$ is a subset of $V(G)$, we write $A^c$ for its complement.  If $A$ and $B$ are subsets of $V(G)$, we write $E_G (A,B)$ for the set of edges with one endpoint in $A$ and the other endpoint in $B$.  We write $E (A,B)$ when the graph $G$ is clear from the context.  A set of vertices $B \subseteq V(G)$ is \emph{connected} if, for every proper subset $A \subsetneq B$, the set $E(A, B \smallsetminus A)$ is nonempty.

A \emph{subgraph} of a graph $G$ is a graph that can be obtained from $G$ by deleting edges and deleting isolated vertices.  A \emph{minor} of a graph $G$ is a graph that can be obtained from $G$ by contracting edges, deleting edges, and deleting isolated vertices.  If $e \in E(G)$ is an edge with endpoints $v$ and $w$, the \emph{edge-subdivision} of $G$ at $e$ is obtained by introducing a new vertex $u$ in the middle of $e$.  In other words, the vertex set of the edge-subdivision is $V(G) \cup \{ u \}$, and the edge set of the edge-subdivision is $E(G) \cup \{ uv, uw \} \smallsetminus \{ e \}$.  A \emph{subdivision} of a graph $G$ is one that can be obtained from $G$ by finitely many edge-subdivisions.

We briefly describe the theory of divisors on graphs.  For a more detailed treatment, we refer the reader to \cite{BakerNorine} or \cite{BakerJensen}.  A \emph{divisor} on a graph $G$ is a formal $\mathbb{Z}$-linear combination of vertices of $G$.  It is standard to think of a divisor as a stack of poker chips on each of the vertices, with negative coefficients corresponding to vertices that are ``in debt''.  For this reason, divisors are sometimes referred to as \emph{chip configurations}.  (We use the term ``divisor'' to emphasize the connection with algebraic geometry.)  The \emph{degree} of a divisor is the sum of the coefficients:
\[
\mathrm{deg} \left( \sum_{v \in V(G)} a_v \cdot v \right) := \sum_{v \in V(G)} a_v .
\]
In other words, the degree is the total number of poker chips.  A divisor is said to be \emph{effective} if all of the coefficients are nonnegative -- that is, none of the vertices are in debt.  The \emph{support} of an effective divisor $D$, denoted $\mathrm{Supp}(D)$, is the set of vertices with nonzero coefficient.

Given a divisor $D$ on $G$ and a vertex $v \in V(G)$, we may \emph{fire} the vertex $v$ to obtain a new divisor.  For each edge $e$ with $v$ as an endpoint, this new divisor has 1 fewer chip at $v$, and 1 more chip at the other endpoint of $e$.  If we fire every vertex in a subset $A \subseteq V(G)$, we say that we fire the subset $A$.  (The resulting divisor is independent of the order in which one fires the vertices in $A$.)  We say that two divisors are \emph{equivalent} if one can be obtained from the other by a sequence of chip-firing moves.  Given a vertex $v \in V(G)$, an effective divisor $D$ is \emph{$v$-reduced} if, for every subset $A \subseteq V(G) \smallsetminus \{ v \}$, the divisor obtained by firing $A$ is not effective.  Every effective divisor is equivalent to a unique $v$-reduced divisor.  We say that a divisor $D$ on $G$ has \emph{positive rank} if its $v$-reduced representative contains $v$ in its support, for every vertex $v \in V(G)$.  The \emph{gonality} of $G$ is the minimum degree of a divisor of positive rank on $G$.

\section{Brambles and Scrambles}
\label{Sec:Scramble}

We make the following definition.

\begin{definition}
A \emph{scramble} in a graph $G$ is a set $\mathscr{S} = \{ E_1 , \ldots E_n \}$ of connected subsets of $V(G)$.
\end{definition}

We will often refer to the subsets $E_i$ as \emph{eggs}.  Scrambles with certain properties have been studied extensively in the graph theory literature.

\begin{definition}
A \emph{bramble} is a scramble $\mathscr{S}$ with the property that $E \cup E'$ is connected for every pair $E, E' \in \mathscr{S}$.  It is called a \emph{strict bramble} if every pair of elements $E, E' \in \mathscr{S}$ has nonempty intersection.
\end{definition}

\begin{definition}
A set $C \subseteq V(G)$ is called a \emph{hitting set} for a scramble $\mathscr{S}$ if $C \cap E \neq \emptyset$ for all $E \in \mathscr{S}$.
\end{definition}

The \emph{order} of a bramble $\mathscr{B}$ is the minimum size of a hitting set for $\mathscr{B}$.  The \emph{bramble number} of a graph $G$ is the maximum order of a bramble in $G$, and is denoted $\mathrm{bn}(G)$.  A result of Seymour and Thomas shows that the bramble number of a graph is closely related to another well-known graph invariant, known as the \emph{treewidth} $\mathrm{tw}(G)$.  In particular, $\mathrm{tw}(G) = \mathrm{bn}(G) - 1$ for any graph $G$ \cite{SeymourThomas}.  Here, we define some related notions for more general scrambles.

\begin{definition}
\label{Def:ScrambleOrder}
The \emph{scramble order} of a scramble $\mathscr{S}$ is the maximum integer $k$ such that:
\begin{enumerate}
\item  no set $C \subseteq V(G)$ of size less than $k$ is a hitting set for $\mathscr{S}$, and
\item  if $A \subseteq V(G)$ is a set such that there exists $E, E' \in \mathscr{S}$ with $E \subseteq A$ and $E' \subseteq A^c$, then $\vert E(A,A^c) \vert \geq k$.
\end{enumerate}
The scramble order of a scramble $\mathscr{S}$ is denoted $\vert \vert \mathscr{S} \vert \vert$.
The \emph{scramble number} of a graph $G$, denoted $\mathrm{sn}(G)$, is the maximum scramble order of a scramble in $G$.
\end{definition}

We note the following observations about the scramble order of brambles.

\begin{lemma}
The order of a strict bramble is equal to its scramble order.
\end{lemma}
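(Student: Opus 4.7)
The plan is to argue that for a strict bramble the second condition in Definition~\ref{Def:ScrambleOrder} is vacuous, so the scramble order is governed entirely by the hitting-set condition, which is exactly the definition of bramble order.

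First I would denote the bramble (i.e., hitting-set) order of the strict bramble $\mathscr{B}$ by $k$, so that every hitting set has at least $k$ elements and some hitting set has exactly $k$ elements. Condition (1) in Definition~\ref{Def:ScrambleOrder} is then satisfied for this value of $k$ and fails for any larger value, by definition of minimum hitting set.

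Next, and this is the heart of the argument, I would observe that for a strict bramble condition (2) is vacuously true for every integer: there is simply no $A \subseteq V(G)$ together with eggs $E, E' \in \mathscr{B}$ satisfying $E \subseteq A$ and $E' \subseteq A^c$. Indeed, any two eggs $E, E'$ of a strict bramble have nonempty intersection, but $E \subseteq A$ and $E' \subseteq A^c$ would force $E \cap E' \subseteq A \cap A^c = \emptyset$, a contradiction. Hence the supremum of $k$ for which both (1) and (2) hold coincides with the supremum of $k$ for which (1) alone holds, which is exactly the bramble order $k$.

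I do not anticipate any real obstacle here; the lemma is essentially a definitional unpacking, and the only subtlety is noticing that strictness (every pair of eggs meets) is precisely what kills condition (2). One could even state the proof in a single sentence, but splitting it into the two observations above keeps the roles of the two clauses in Definition~\ref{Def:ScrambleOrder} transparent.
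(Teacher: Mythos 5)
Your proof is correct and follows the same route as the paper: the minimum hitting set size pins down condition (1), and strictness (every pair of eggs intersects) makes condition (2) of Definition~\ref{Def:ScrambleOrder} vacuous, so the scramble order equals the bramble order. No gaps.
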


\begin{proof}
Let $\mathscr{B}$ be a strict bramble of order $k$.  By definition, there is a hitting set $C \subseteq V(G)$ of size $k$ for $\mathscr{B}$, and no such set of size less than $k$.  The scramble order of $\mathscr{B}$ is therefore at most $k$.  Since $\mathscr{B}$ is a strict bramble, any two sets $E, E' \in \mathscr{B}$ have nonempty intersection.  It follows that there is no set $A \subseteq V(G)$ that contains $E$ and whose complement contains $E'$, so property (2) of Definition~\ref{Def:ScrambleOrder} is satisfied vacuously.
\end{proof}

\begin{lemma}
\label{Lem:BrambleScramble}
Let $\mathscr{B}$ be a bramble of order $k$.  Then the scramble order of $\mathscr{B}$ is either $k$ or $k-1$.
\end{lemma}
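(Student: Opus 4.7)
The upper bound $\|\mathscr{B}\| \le k$ is immediate from Definition~\ref{Def:ScrambleOrder}: since $\mathscr{B}$ has a hitting set of size $k$, condition~(1) of the scramble order definition cannot hold at any value exceeding $k$.

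For the lower bound $\|\mathscr{B}\| \ge k - 1$, I would verify both conditions at level $k - 1$. Condition~(1) is automatic, since every hitting set has size at least $k > k - 1$. The substance is condition~(2): given any $A \subseteq V(G)$ with $E \subseteq A$ and $E' \subseteq A^c$ for some eggs $E, E' \in \mathscr{B}$, I must show $|E(A, A^c)| \ge k - 1$. The plan is to prove this by constructing a hitting set of $\mathscr{B}$ of size at most $m + 1$, where $m := |E(A, A^c)|$; since every hitting set has size at least $k$, such a construction yields $m \ge k - 1$.

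The building blocks are $B \subseteq A$ and $B' \subseteq A^c$, the $A$-endpoints and $A^c$-endpoints of the cut edges, so $|B|, |B'| \le m$. The bramble property readily implies that $B$ meets every egg with a vertex in $A$: for a mixed egg this follows from connectedness (the egg contains a cut edge, both of whose endpoints lie in it), and for an egg $F \subseteq A$ it follows from ``$F \cup E'$ is connected'' together with $F \subseteq A$ and $E' \subseteq A^c$, which forces a cut edge from $F$ to $E'$ and hence a vertex of $B$ in $F$. A symmetric argument shows $B'$ meets every egg with a vertex in $A^c$, so $B \cup B'$ is already a hitting set.

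The main obstacle is refining this to size at most $m + 1$, since the naive bound $|B \cup B'| \le 2m$ is too weak when the bipartite graph of cut edges has many components. To overcome this, I would select exactly one endpoint from each cut edge (producing $m$ vertices) and adjoin a single auxiliary vertex chosen from $E$ or $E'$. The endpoint selection must be coordinated with the auxiliary vertex so that every egg contained entirely in $A$ (resp.\ $A^c$) is met, and this is where the bramble property must be used more delicately than above; I expect the cleanest argument to split into cases depending on whether the eggs of $\mathscr{B}$ contained in $A^c$ admit a common vertex, with the degenerate case handled by exchanging the roles of $A$ and $A^c$.
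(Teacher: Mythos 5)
Your framing is sound, and the easy parts are correct: the upper bound $\Vert\mathscr{B}\Vert \le k$, the automatic verification of condition~(1) of Definition~\ref{Def:ScrambleOrder} at level $k-1$, the reduction of condition~(2) to constructing a hitting set of size at most $m+1$ where $m = \vert E(A,A^c)\vert$, and the observation that the set $B$ of $A$-endpoints of cut edges meets every egg meeting $A$ (and symmetrically for $B'$). But the entire content of the lemma lies in the step you defer. Your proposed mechanism --- one endpoint per cut edge plus a single auxiliary vertex taken from $E$ or $E'$, with a case split on whether the eggs contained in $A^c$ admit a common vertex and the other case handled by exchanging $A$ and $A^c$ --- does not cover the general situation. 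Both sides can fail simultaneously: consider a bramble consisting of four pairwise disjoint eggs, two contained in $A$ and joined by an edge inside $A$, two contained in $A^c$ and joined by an edge inside $A^c$, with cut edges joining each egg in $A$ to each egg in $A^c$. Here neither the eggs in $A$ nor the eggs in $A^c$ have a common vertex, and no single auxiliary vertex can cover two disjoint eggs on the same side, so the endpoint selection itself must be coordinated to hit whole eggs on \emph{both} sides of the cut; nothing in your sketch supplies an argument that such a coordinated selection always exists (in this small example one can exhibit one by hand, but that coordination in general is precisely the nontrivial point). As written, the proof of the key inequality $\vert E(A,A^c)\vert \ge k-1$ is therefore incomplete.

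Note that the paper does not reprove this inequality either: its proof of the lemma is two lines, obtaining the upper bound exactly as you do and then quoting \cite[Lemma~3.3]{dbG} for the statement that if $E \subseteq A$ and $E' \subseteq A^c$ for eggs of a bramble of order $k$, then $\vert E(A,A^c)\vert \ge k-1$. So the clean fix is to cite that lemma, as the paper does; if you want a self-contained argument, you must actually carry out the construction of the $(m+1)$-element hitting set from van Dobben de Bruyn--Gijswijt, which requires a genuinely cleverer selection of cut-edge endpoints than the common-vertex dichotomy you propose.
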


\begin{proof}
By definition, there is a hitting set $C \subseteq V(G)$ of size $k$ for $\mathscr{B}$, and no such set of size less than $k$.  The scramble order of $\mathscr{B}$ is therefore at most $k$.  By \cite[Lemma~3.3]{dbG}, if $E, E' \in \mathscr{B}$ and $A \subseteq V(G)$ is a subset such that $E \subseteq A$ and $E' \subseteq A^c$, then $\vert E(A,A^c) \vert \geq k-1$.  It follows that the scramble order of $\mathscr{B}$ is at least $k-1$. 
\end{proof}

\begin{corollary}
\label{Cor:TW}
For any graph $G$, we have $\mathrm{tw}(G) \leq \mathrm{sn}(G)$.
\end{corollary}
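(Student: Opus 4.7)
The plan is to deduce the corollary directly from Lemma~\ref{Lem:BrambleScramble} together with the Seymour--Thomas characterization of treewidth. First, invoke \cite{SeymourThomas} to obtain a bramble $\mathscr{B}$ in $G$ of order $\mathrm{bn}(G) = \mathrm{tw}(G) + 1$. Since a bramble is by definition a scramble, $\mathscr{B}$ is an eligible input for computing $\mathrm{sn}(G)$, so $\mathrm{sn}(G) \geq \vert\vert \mathscr{B} \vert\vert$.

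Next, apply Lemma~\ref{Lem:BrambleScramble} to conclude that $\vert\vert \mathscr{B} \vert\vert \geq \mathrm{bn}(G) - 1 = \mathrm{tw}(G)$. Chaining these inequalities yields $\mathrm{tw}(G) \leq \mathrm{sn}(G)$, which is the desired bound.

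There is essentially no obstacle here, since all the real work has been done in Lemma~\ref{Lem:BrambleScramble}: the content of that lemma is precisely the observation (attributed to \cite[Lemma~3.3]{dbG}) that firing across any cut separating two bramble elements costs at least $\mathrm{bn}(G) - 1$ edges. The only subtlety worth noting in the write-up is that the drop from $k$ to $k-1$ in Lemma~\ref{Lem:BrambleScramble} is exactly compensated by the $+1$ in the Seymour--Thomas identity $\mathrm{tw}(G) = \mathrm{bn}(G) - 1$, so no further slack is lost. The proof should therefore be a two- or three-line chain of inequalities.
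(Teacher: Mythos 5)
Your proof is correct and follows exactly the paper's argument: take a maximum-order bramble, apply the Seymour--Thomas identity $\mathrm{tw}(G) = \mathrm{bn}(G) - 1$, and use Lemma~\ref{Lem:BrambleScramble} to see its scramble order is at least $\mathrm{bn}(G) - 1$. Nothing further is needed.
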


\begin{proof}
Let $\mathscr{B}$ be a bramble of maximum order $k$ in $G$.  By \cite{SeymourThomas}, we have $\mathrm{tw}(G) = k-1$.  By Lemma~\ref{Lem:BrambleScramble}, the scramble order of $\mathscr{B}$ is at least $k-1$, hence $\mathrm{sn}(G) \geq k-1$.
\end{proof}

\section{Properties of the Scramble Number}

We now prove our main result about the scramble number.  Namely, that the scramble number of a graph is a lower bound for the graph's gonality.  Our argument follows closely that of \cite[Theorem~3.1]{dbG}, which shows that the treewidth of a graph is a lower bound for the graph's gonality.  Indeed, we defined the scramble number with the specific goal of stating \cite[Theorem~3.1]{dbG} in its maximum generality.

\begin{theorem}
\label{Thm:Bound}
For any graph $G$, we have $\mathrm{sn}(G) \leq \mathrm{gon}(G)$.
\end{theorem}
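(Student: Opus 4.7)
The plan is to mirror the proof of \cite[Theorem~3.1]{dbG}, now using condition (2) of Definition~\ref{Def:ScrambleOrder} as a direct replacement for the bramble-specific \cite[Lemma~3.3]{dbG}. Let $D$ be an effective divisor of positive rank with $\deg(D) = \mathrm{gon}(G)$, and let $\mathscr{S}$ be a scramble realizing $\mathrm{sn}(G) = k$. I argue by contradiction, supposing $\deg(D) < k$.

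The first step is to exploit $v$-reduced divisors. For each vertex $v \in V(G)$, let $D_v$ denote the $v$-reduced divisor equivalent to $D$. Positive rank of $D$ gives $v \in \mathrm{Supp}(D_v)$, and effectiveness with $\deg(D_v) = \deg(D) < k$ forces $|\mathrm{Supp}(D_v)| < k$. Condition (1) then prevents $\mathrm{Supp}(D_v)$ from being a hitting set, so there exists an egg $E_v \in \mathscr{S}$ with $E_v \cap \mathrm{Supp}(D_v) = \emptyset$; every vertex of $E_v$ has zero chips in $D_v$, and $v \notin E_v$.

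The second step is Dhar's burning algorithm. After choosing $v$ to lie in an egg $E_0 \in \mathscr{S}$ disjoint from $E_v$, run the burning on $D_v$ with initial burnt set $B_0 = E_0$: iteratively add $u \notin B$ to $B$ whenever $|E(u, B)| > D_v(u)$. Since $D_v$ is $v$-reduced and $v \in E_0$, this process dominates ordinary Dhar burning from $\{v\}$ and therefore consumes all of $V(G)$. At every intermediate stage each unburnt vertex is below the burning threshold, yielding the uniform bound
\[
|E(A, A^c)| \;=\; \sum_{u \in A} |E(u, B)| \;\leq\; \sum_{u \in A} D_v(u) \;\leq\; \deg(D) \;<\; k.
\]
Stopping just before any vertex of $E_v$ ignites leaves $E_v \subseteq A$ (not yet touched) and $E_0 \subseteq A^c$ (in $B$ from the start), while $|E(A,A^c)| < k$. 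This contradicts condition (2) of Definition~\ref{Def:ScrambleOrder}, forcing $\deg(D) \geq k$ as required.

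The main obstacle is producing the egg $E_0 \ni v$ disjoint from $E_v$: when $\mathscr{S}$ consists of pairwise disjoint eggs this is immediate, but in general it requires a separate argument combining positive rank with both conditions of Definition~\ref{Def:ScrambleOrder} (or a modification of the burning process handling overlapping eggs). This is precisely the technical point where the scramble-based argument extends the bramble-based argument of \cite{dbG}, and once such a configuration is secured the remaining estimates are routine.
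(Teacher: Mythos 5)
Your Step 1 is fine, but Step 2 contains two genuine gaps, and the one you flag as a technicality is not the more serious of the two. The serious one is the cut estimate: the claim ``at every intermediate stage each unburnt vertex is below the burning threshold'' is false. Dhar's algorithm advances precisely because some unburnt vertex exceeds its threshold, and since $D_v$ is $v$-reduced and $E_0 \ni v$ is burnt from the start, the process never gets stuck --- so there is no stage at which the unburnt set $A$ satisfies $\vert E(u,B)\vert \leq D_v(u)$ for all $u \in A$. Concretely, at the moment just before the first vertex of $E_v$ ignites, every vertex of $E_v$ is unburnt with $D_v(u)=0$, so each edge from $E_v$ to the burnt region already violates the vertexwise bound you sum; the inequality $\vert E(A,A^c)\vert \leq \deg(D)$ therefore has no justification. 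What the bound really requires is a set $U$ such that \emph{firing $U$ preserves effectiveness} (then each $u \in U$ carries at least its outdegree in chips), and the $v$-reduced divisor is exactly the divisor from which no such firing move remains --- starting the argument at $D_v$ discards the very structure the estimate needs. The second gap is the one you acknowledge: the egg $E_0 \ni v$ disjoint from $E_v$ need not exist (eggs may overlap, $v$ need not lie in any egg, and $E_v$ depends on $v$, so choosing $v$ first is circular); this is not a routine patch.

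The paper avoids both problems by a different mechanism. Among all effective divisors equivalent to the given positive-rank divisor, it chooses $D$ whose support meets the maximum number of eggs, and writes the passage from $D$ to its $v$-reduced form as a nested chain $\emptyset \subsetneq U_1 \subseteq \cdots \subseteq U_k \subseteq V(G)\smallsetminus\{v\}$ of set-firings with every intermediate divisor effective. The maximality of $D$ guarantees an index $i$ at which some egg $E'$ meeting $\mathrm{Supp}(D)$ loses all its chips; connectivity of $E'$ forces $E' \subseteq U_i$ (the analogue of \cite[Lemma~3.2]{dbG}), and a second minimal-index argument places the missed egg $E$ inside $U_i^c$. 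Condition (2) then applies to the cut $E(U_i,U_i^c)$, and effectiveness of the divisor obtained by firing $U_i$ gives $\deg(D) \geq \sum_{u \in U_i} D_{i-1}(u) \geq \vert E(U_i,U_i^c)\vert \geq \vert\vert\mathscr{S}\vert\vert$. No pair of disjoint eggs through $v$ is ever needed, and no cut is taken at an intermediate stage of a burning process. To repair your write-up you would essentially have to import this choice of $D$ and the firing-chain argument, i.e.\ follow the paper's route rather than the reduced-divisor-plus-Dhar shortcut.
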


\begin{proof}
Let $\mathscr{S}$ be a scramble on $G$, and let $D'$ be a divisor of positive rank on $G$.  We will show that $\mathrm{deg}(D') \geq \vert \vert \mathscr{S} \vert \vert$.  Among the effective divisors equivalent to $D'$, we choose $D$ such that $\mathrm{Supp}(D)$ intersects a maximum number of eggs in $\mathscr{S}$.  If $\mathrm{Supp}(D)$ is a hitting set for $\mathscr{S}$ then, by definition,
\[
\mathrm{deg}(D) \geq \vert \mathrm{Supp}(D) \vert \geq \vert \vert \mathscr{S} \vert \vert .
\]

Conversely, suppose that there is some egg $E \in \mathcal{S}$ that does not intersect $\mathrm{Supp}(D)$, and let $v\in E$.  Since $D$ has positive rank and $v \notin \mathrm{Supp}(D)$, it follows that $D$ is not $v-$reduced.  Therefore there exists a chain
\[
\emptyset \subsetneq U_1 \subseteq \cdots \subseteq U_k \subseteq V(G) \smallsetminus \{v \}
\]
and a sequence of effective divisors $D_0 , D_1, \ldots, D_k$ such that:
\begin{enumerate}
\item  $D_0 = D$,
\item  $D_k$ is $v$-reduced, and
\item  $D_i$ is obtained from $D_{i-1}$ by firing the set $U_i$, for all $i$.
\end{enumerate} 
Since $D$ has positive rank, we see that $v \in \mathrm{Supp}(D_k)$ and hence $\mathrm{Supp}(D_k)$ intersects $E$.  By assumption, $\mathrm{Supp}(D_k)$ does not intersect more eggs than $\mathrm{Supp}(D)$, so there is at least one egg $E'$ that intersects $\mathrm{Supp}(D)$ but not $\mathrm{Supp}(D_k)$.  Let $i \leq k$ be the smallest index such that there is some $E' \in \mathscr{S}$ that intersects $\mathrm{Supp}(D)$ but not $\mathrm{Supp}(D_i)$.  Then $E' \cap \mathrm{Supp}(D_{i-1}) \neq \emptyset$ and $E' \cap \mathrm{Supp}(D_i) = \emptyset$.  By \cite[Lemma~3.2]{dbG}, it follows that $E' \subseteq U_i$.

Again, by assumption, $\mathrm{Supp}(D_{i-1})$ does not intersect more eggs than $\mathrm{Supp}(D)$, so $\mathrm{Supp}(D_{i-1})$ does not intersect $E$.  Let $j \geq i$ be the smallest index such that $E \cap \mathrm{Supp}(D_{j-1}) = \emptyset$ and $E \cap \mathrm{Supp}(D_j) \neq \emptyset$.  Since $D_{j-1}$ can be obtained from $D_j$ by firing $U_j^c$, we see that $E \subseteq U_j^c \subseteq U_i^c$.  Since $E \subseteq U_i^c$ and $E' \subseteq U_i$, it follows by the definition of a scramble that $\vert E(U_i , U_i^c) \vert \geq \vert \vert \mathscr{S} \vert \vert$.  Since 
\[
\mathrm{deg}(D_{i-1}) \geq \sum_{u \in U_i} D_{i-1}(u) \geq \vert E(U_i, U_i^c) \vert,
\]
we have
\[
\mathrm{deg}(D_{i-1}) \geq \vert \vert \mathscr{S} \vert \vert .
\]
\end{proof}

We include here some observations about graphs of low scramble number.

\begin{corollary}
\label{Cor:Tree}
The scramble number of a graph $G$ is 1 if and only if $G$ is a tree.
\end{corollary}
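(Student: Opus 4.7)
The plan is to prove both directions separately, leveraging Theorem~\ref{Thm:Bound} for one direction and constructing an explicit scramble of order $2$ for the other.

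For the ``if'' direction, I would first observe that if $G$ is a tree then $\mathrm{gon}(G) = 1$: placing a single chip at any vertex $w$ yields a divisor of positive rank, since in a tree the $v$-reduced representative of any degree-$1$ effective divisor is the single chip at $v$. Combining this with Theorem~\ref{Thm:Bound} gives $\mathrm{sn}(G) \leq 1$. For the reverse inequality, the scramble $\mathscr{S} = \{\{v\}\}$ consisting of a single one-vertex egg has $\vert \vert \mathscr{S} \vert \vert = 1$: condition (2) of Definition~\ref{Def:ScrambleOrder} is vacuous, $\{v\}$ is itself a hitting set of size $1$, and the empty set is not a hitting set.

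For the ``only if'' direction I would argue the contrapositive: if $G$ is not a tree, then $\mathrm{sn}(G) \geq 2$. Since $G$ is connected and not a tree, it contains a cycle $C$. Choose two distinct vertices $u, v$ on $C$ and take the scramble $\mathscr{S} = \{\{u\}, \{v\}\}$. Every hitting set for $\mathscr{S}$ must contain both $u$ and $v$, so no set of size less than $2$ is a hitting set, giving condition (1) with $k = 2$. For condition (2), the only sets $A \subseteq V(G)$ to check are those with $u \in A$ and $v \in A^c$ (or vice versa); since the two arcs of $C$ already exhibit two edge-disjoint $u$-$v$ paths, any such cut must contain at least one edge from each arc, forcing $\vert E(A, A^c) \vert \geq 2$. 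Hence $\vert \vert \mathscr{S} \vert \vert \geq 2$ and therefore $\mathrm{sn}(G) \geq 2$.

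The main subtlety is the verification of condition (2): one needs \emph{edge}-disjointness, not just the existence of two internally vertex-disjoint $u$-$v$ paths, but the two arcs of the cycle directly exhibit this, so no appeal to Menger's theorem is needed. Everything else is bookkeeping against Definition~\ref{Def:ScrambleOrder}.
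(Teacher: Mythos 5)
Your proof is correct, but it follows a genuinely different route from the paper's, most notably in the ``only if'' direction. The paper gets that direction from the treewidth comparison: $\mathrm{sn}(G)=1$ forces $\mathrm{tw}(G)\leq 1$ by Corollary~\ref{Cor:TW} (hence ultimately by Seymour--Thomas), which implies $G$ is a tree when $G$ is simple, and the multigraph case is handled separately by observing that two parallel edges between $v$ and $w$ yield the order-$2$ scramble $\{\{v\},\{w\}\}$. You instead bypass treewidth entirely: from any cycle $C$ in a non-tree you build the scramble $\{\{u\},\{v\}\}$ on two distinct vertices of $C$ and verify order $\geq 2$ directly, using the two edge-disjoint arcs of $C$ to certify condition (2) of Definition~\ref{Def:ScrambleOrder}. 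This is essentially the paper's parallel-edge trick promoted to the general case, and it buys a self-contained, uniform argument for simple graphs and multigraphs alike, with no appeal to the bramble-number/treewidth theorem or to the classification of graphs of treewidth $1$; the paper's version buys brevity by citing those known facts. In the ``if'' direction the two arguments are essentially the same upper bound $\mathrm{sn}(G)\leq\mathrm{gon}(G)=1$ via Theorem~\ref{Thm:Bound}; you supply the lower bound with the explicit one-egg scramble rather than $\mathrm{tw}(G)\leq\mathrm{sn}(G)$, which is a harmless (indeed slightly more elementary) substitution. Your verifications of conditions (1) and (2) are accurate as stated, including the observation that edge-disjointness of the two arcs is what makes the cut count work.
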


\begin{proof}
If $G$ is a tree, then
\[
1 = \mathrm{tw}(G) \leq \mathrm{sn}(G) \leq \mathrm{gon}(G) = 1,
\]
so $\mathrm{sn}(G) = 1$.  On the other hand, if $\mathrm{sn}(G)=1$, then $\mathrm{tw}(G) \leq \mathrm{sn} (G) = 1$.  If $G$ is a simple graph, then this implies that $G$ is a tree.  To see that $G$ must be simple, let $v$ and $w$ be two adjacent vertices in $G$.  If there are multiple edges between $v$ and $w$, then the scramble $\mathscr{S} = \{ \{ v \} , \{ w \} \}$ has scramble order 2.  Since $\mathrm{sn} (G) = 1$, it follows that any pair of vertices in $G$ is connected by at most 1 edge.
\end{proof}

\begin{corollary}
\label{Lem:Cycle}
If $G$ is a cycle, then $\mathrm{sn}(G) = 2$.
\end{corollary}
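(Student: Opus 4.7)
The plan is to sandwich $\mathrm{sn}(G)$ between $2$ and $2$ using Theorem~\ref{Thm:MainThm}, which gives $\mathrm{tw}(G) \leq \mathrm{sn}(G) \leq \mathrm{gon}(G)$. Both $\mathrm{tw}(G) = 2$ and $\mathrm{gon}(G) = 2$ are classical for a cycle, so the chain pinches $\mathrm{sn}(G) = 2$. That said, I would prefer to prove each inequality directly in the language of scrambles, since it is short and reinforces the definitions.

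For the lower bound $\mathrm{sn}(G) \geq 2$, I would exhibit an explicit scramble of order $2$. Pick any two distinct vertices $v, w \in V(G)$ and set $\mathscr{S} = \{\{v\}, \{w\}\}$. Each singleton is trivially connected. No set of size less than $2$ can hit both eggs, so condition (1) of Definition~\ref{Def:ScrambleOrder} is satisfied with $k = 2$. For condition (2), the only sets $A \subseteq V(G)$ with an egg in $A$ and an egg in $A^c$ are those separating $v$ from $w$; any such $A$ gives a nontrivial cut of the cycle, and since a cycle is $2$-edge-connected, $|E(A, A^c)| \geq 2$. Hence $\|\mathscr{S}\| \geq 2$.

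For the upper bound $\mathrm{sn}(G) \leq 2$, I would invoke Theorem~\ref{Thm:Bound}, reducing to the claim $\mathrm{gon}(G) \leq 2$. This is witnessed by any divisor of the form $D = v_0 + v_k$ with $v_0, v_k$ two vertices of the cycle $v_0, v_1, \ldots, v_{n-1}$: firing the arc $\{v_0, v_1, \ldots, v_k\}$ transforms $D$ into the effective divisor $v_{n-1} + v_{k+1}$, and iterating this rotation shows that $D$ is equivalent to an effective divisor whose support contains any prescribed vertex. Thus $D$ has positive rank, $\mathrm{gon}(G) \leq 2$, and combined with the lower bound we obtain $\mathrm{sn}(G) = 2$.

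There is no real obstacle here; the only point requiring care is the verification of condition (2) in the lower bound, where $2$-edge-connectivity of the cycle is the essential input.
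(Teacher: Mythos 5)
Your proposal is correct, and its first paragraph is in fact the paper's entire proof: the authors simply write $2 = \mathrm{tw}(G) \leq \mathrm{sn}(G) \leq \mathrm{gon}(G) = 2$, citing the classical values of treewidth and gonality for a cycle. Where you diverge is in making both ends of the sandwich self-contained: for the lower bound you bypass treewidth (and hence the Seymour--Thomas input hidden in Corollary~\ref{Cor:TW}) by exhibiting the scramble $\{\{v\},\{w\}\}$ and using $2$-edge-connectivity of the cycle to verify condition (2) of Definition~\ref{Def:ScrambleOrder}; for the upper bound you verify $\mathrm{gon}(G)\leq 2$ by hand with the rotating divisor $v_0+v_k$ rather than quoting it. Both steps are sound (your chip-firing rotation does reach every vertex, since after $t$ firings the chips sit at $v_{-t}$ and $v_{k+t}$), and the direct scramble construction is arguably more in the spirit of the paper's later examples --- it is the same singleton-egg trick the authors use for multi-edges in Corollary~\ref{Cor:Tree}. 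What the paper's version buys is brevity, given facts already in the toolkit; what yours buys is independence from the treewidth machinery and a concrete illustration of how scramble order is certified.
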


\begin{proof}
If $G$ is a cycle, then
\[
2 = \mathrm{tw}(G) \leq \mathrm{sn}(G) \leq \mathrm{gon}(G) = 2,
\]
so $\mathrm{sn}(G) = 2$.
\end{proof}

One of the major advantages of the treewidth bound from \cite{dbG} is that the treewidth is minor monotone.  In other words, if $G'$ is a graph minor of a graph $G$, then $\mathrm{tw}(G') \leq \mathrm{tw}(G)$.  This is not true for the scramble number, as the following example shows.

\begin{example}
\label{Ex:Minor}
Let $G$ be the graph depicted in Figure~\ref{Fig:Gon3}.  If $v$ is the green vertex, then the divisor $3v$ has positive rank.  It follows that the gonality of $G$ is at most 3, and thus the scramble number of $G$ is at most 3 by Theorem~\ref{Thm:Bound}.

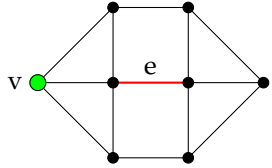
\begin{figure}[h]

\begin{tikzpicture}
\draw (0,1)--(1,0);
\draw (0,1)--(1,1);
\draw (0,1)--(1,2);
\draw (1,0)--(1,1);
\draw (1,0)--(2,0);
\draw (1,1)--(1,2);
\draw[red][thick] (1,1)--(2,1);
\draw (1.5,1.2) node{e};
\draw (1,2)--(2,2);
\draw (2,0)--(2,1);
\draw (2,0)--(3,1);
\draw (2,1)--(2,2);
\draw (2,1)--(3,1);
\draw (2,2)--(3,1);
\filldraw[green] (0,1) circle (3pt);
\draw (-0.3,1) node{v};
\draw (0,1) circle (3pt);
\filldraw (1,0) circle (2pt);
\filldraw (1,1) circle (2pt);
\filldraw (1,2) circle (2pt);
\filldraw (2,0) circle (2pt);
\filldraw (2,1) circle (2pt);
\filldraw (2,2) circle (2pt);
\filldraw (3,1) circle (2pt);
\end{tikzpicture}
\caption{A graph $G$ with scramble number 3.}
\label{Fig:Gon3}
\end{figure}

Now, let $G'$ be the graph pictured in Figure~\ref{Fig:S4}, obtained by contracting the red edge $e$ in $G$.  The 4 colored subsets are the eggs of a scramble $\mathscr{S}$, which we now show has scramble order 4.  Because the 4 eggs are disjoint, there is no hitting set of size less than 4.  Now, let $A \subseteq V(G')$ be a set with the property that both $A$ and $A^c$ contain an egg.  By exchanging the roles of $A$ and $A^c$, we may assume that $A$ contains the center red vertex.  If $A$ consists solely of this vertex, then $\vert E(A,A^c) \vert = 6$.  Otherwise, $A$ contains some, but not all, of the vertices on the hexagonal outer ring.  We then see that $E(A,A^c)$ contains at least two edges in the hexagonal outer ring, and at least two edges that have the center red vertex as an endpoint.  Thus, $\vert E(A,A^c) \vert \geq 4$.

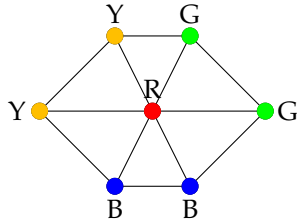
\begin{figure}[h]
\begin{tikzpicture}
\draw (0,1) circle (3pt);
\draw (1,0) circle (3pt);
\draw (1.5,1) circle (3pt);
\draw (1,2) circle (3pt);
\draw (2,0) circle (3pt);
\draw (3,1) circle (3pt);
\draw (2,2) circle (3pt);
\draw (0,1)--(1,0);
\draw (0,1)--(3,1);
\draw (0,1)--(1,2);
\draw (1,0)--(2,2);
\draw (1,0)--(2,0);
\draw (1,2)--(2,0);
\draw (1,2)--(2,2);
\draw (2,2)--(3,1);
\draw (2,0)--(3,1);
\filldraw[gold] (0,1) circle (3pt);
\draw (-0.3,1) node{Y};
\filldraw[blue] (1,0) circle (3pt);
\draw (1,-0.3) node{B};
\filldraw[red] (1.5,1) circle (3pt);
\draw (1.5,1.3) node{R};
\filldraw[gold] (1,2) circle (3pt);
\draw (1,2.3) node{Y};
\filldraw[blue] (2,0) circle (3pt);
\draw (2,-0.3) node{B};
\filldraw[green] (3,1) circle (3pt);
\draw (3.3,1) node{G};
\filldraw[green] (2,2) circle (3pt);
\draw (2,2.3) node{G};
\end{tikzpicture}

\caption{The graph $G'$ is a graph minor of $G$ with higher scramble number.}
\label{Fig:S4}

\end{figure}

\end{example}

While the scramble number is not minor monotone, it is subgraph monotone.

\begin{proposition}
\label{Prop:Subgraph}
If $G'$ is a subgraph of $G$, then $\mathrm{sn}(G') \leq \mathrm{sn}(G)$.
\end{proposition}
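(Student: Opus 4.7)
The plan is to show that any scramble on $G'$ is automatically a scramble on $G$, and its scramble order computed in $G$ is at least as large as its scramble order computed in $G'$. Since $G'$ is a subgraph of $G$, we have $V(G') \subseteq V(G)$ and $E(G') \subseteq E(G)$, so every subset of $V(G')$ that is connected in $G'$ is also connected in $G$. Hence if $\mathscr{S}' = \{E_1, \ldots, E_n\}$ is a scramble on $G'$ realizing $\mathrm{sn}(G')$, it is a bona fide scramble on $G$; the goal is then to show $\|\mathscr{S}'\|_G \geq \|\mathscr{S}'\|_{G'}$.

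For condition (1) of Definition~\ref{Def:ScrambleOrder}, I would observe that every egg $E_i$ is contained in $V(G')$, so any hitting set $C \subseteq V(G)$ for $\mathscr{S}'$ can be replaced by $C \cap V(G')$ without losing the hitting property. Thus the minimum size of a hitting set for $\mathscr{S}'$ in $G$ equals its minimum size in $G'$, which is at least $\|\mathscr{S}'\|_{G'}$.

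For condition (2), suppose $A \subseteq V(G)$ is a set with $E_i \subseteq A$ and $E_j \subseteq A^c$ for some $E_i, E_j \in \mathscr{S}'$. Let $A' := A \cap V(G')$; then $E_i \subseteq A'$ and $E_j \subseteq V(G') \smallsetminus A'$. By the scramble order of $\mathscr{S}'$ in $G'$, $|E_{G'}(A', V(G') \smallsetminus A')| \geq \|\mathscr{S}'\|_{G'}$. Every such edge lies in $E(G')\subseteq E(G)$ and has one endpoint in $A' \subseteq A$ and the other in $V(G')\smallsetminus A' \subseteq A^c$, so it belongs to $E_G(A, A^c)$. Therefore $|E_G(A, A^c)| \geq |E_{G'}(A', V(G') \smallsetminus A')| \geq \|\mathscr{S}'\|_{G'}$, as required.

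Combining, $\|\mathscr{S}'\|_G \geq \|\mathscr{S}'\|_{G'} = \mathrm{sn}(G')$, and hence $\mathrm{sn}(G) \geq \mathrm{sn}(G')$. There is no real obstacle here: the entire argument rests on the elementary observation that passing from $G'$ to $G$ can only add vertices and edges, which can only make cuts larger and can never shrink a minimum hitting set for eggs that live inside $V(G')$. The only thing one must be careful about is translating between subsets of $V(G)$ and subsets of $V(G')$, which is handled by the intersection $A \cap V(G')$.
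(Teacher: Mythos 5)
Your argument is correct and follows essentially the same route as the paper: view the scramble on $G'$ as a scramble on $G$, intersect hitting sets and separating sets with $V(G')$, and note that $\vert E_G(A,A^c)\vert \geq \vert E_{G'}(A\cap V(G'), A^c\cap V(G'))\vert$. No substantive differences to report.
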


\begin{proof}
Let $\mathscr{S}'$ be a scramble on $G'$, and let $\mathscr{S}$ be the scramble on $G$ with the same eggs as $\mathscr{S}'$ on $G'$.  We will show that $\vert \vert \mathscr{S} \vert \vert \geq \vert \vert \mathscr{S}' \vert \vert$.  If $C \subseteq V(G)$ is a hitting set for $\mathscr{S}$, then $C \cap V(G')$ is a hitting set for $\mathscr{S}'$.  Thus, for every hitting set $C$ of $\mathscr{S}$, we have $\vert C \vert \geq \vert \vert \mathscr{S}' \vert \vert$.  Now, let $A$ be a subset of $V(G)$ such that $A$ and $A^c$ both contain eggs of $\mathscr{S}$.  Then $A \cap V(G')$ is a subset of $V(G')$ with the property that both it and its complement contain eggs of $\mathscr{S}'$, and $\vert E_G (A,A^c) \vert \geq \vert E_{G'}(A \cap V(G'), A^c \cap V(G')) \vert$.
It follows that $\vert \vert \mathscr{S} \vert \vert \geq \vert \vert \mathscr{S}' \vert \vert$.
\end{proof}

The scramble number is also invariant under subdivision.

\begin{proposition}
\label{Prop:Refinement}
If $G'$ is a subdivision of $G$, then $\mathrm{sn} (G) = \mathrm{sn} (G')$.
\end{proposition}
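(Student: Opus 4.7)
The plan is to reduce to a single edge-subdivision, then prove the two inequalities separately. By induction, it suffices to assume $G'$ arises from $G$ by subdividing one edge $e = vw$, inserting a new vertex $u$ of degree two.

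For $\mathrm{sn}(G) \leq \mathrm{sn}(G')$, I would take a scramble $\mathscr{S}$ on $G$ and build $\mathscr{S}'$ on $G'$ by setting $E' = E \cup \{u\}$ whenever $\{v, w\} \subseteq E$ and $E' = E$ otherwise; each $E'$ is connected in $G'$. To show $\vert\vert \mathscr{S}' \vert\vert \geq \vert\vert \mathscr{S} \vert\vert$, a hitting set $C' \subseteq V(G')$ for $\mathscr{S}'$ converts to a hitting set $C = (C' \setminus \{u\}) \cup \{v\}$ (or $C = C'$ when $u \notin C'$) for $\mathscr{S}$ of no greater size, because any egg $E'$ hit by $u$ satisfies $v \in E$. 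For the separator condition, given $A' \subseteq V(G')$ separating two eggs of $\mathscr{S}'$, the restriction $A = A' \cap V(G)$ separates the corresponding eggs of $\mathscr{S}$, and a brief case analysis on whether $u$ lies on the same side of $A'$ as $v$ and $w$ yields $\vert E_G(A, A^c) \vert \leq \vert E_{G'}(A', (A')^c) \vert$.

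For the reverse inequality $\mathrm{sn}(G') \leq \mathrm{sn}(G)$, I would first handle the case $\{u\} \in \mathscr{S}'$ separately: the cut of $\{u\}$ in $G'$ has size $\deg(u) = 2$, so $\vert\vert \mathscr{S}' \vert\vert \leq 2$, and the conclusion follows from Corollary~\ref{Cor:Tree} together with the fact that $G$ is a tree if and only if $G'$ is. Otherwise, I would define $\mathscr{S}$ by $E = E' \setminus \{u\}$ for each egg $E' \in \mathscr{S}'$; these $E$ are connected in $G$ because any disconnection of $E' \setminus \{u\}$ in $G'$ produces two components meeting $v$ and $w$ respectively, which the edge $e$ rejoins in $G$. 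Since $E \subseteq E'$, any hitting set for $\mathscr{S}$ in $V(G)$ is also one for $\mathscr{S}'$ in $V(G')$, transferring the hitting-set bound. For the separator condition, given $A$ separating eggs of $\mathscr{S}$, I would lift to $A' \in \{A, A \cup \{u\}\}$ chosen so that $A'$ separates the corresponding pre-image eggs in $\mathscr{S}'$, with case analysis confirming $\vert E_{G'}(A', (A')^c) \vert = \vert E_G(A, A^c) \vert$.

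The main obstacle is the separator condition in the reverse direction, specifically when both pre-images of the separated eggs contain $u$: these two eggs intersect at $u$ and cannot be separated in $G'$ by any set. However, the connectedness of each pre-image forces $v$ into one of the separated eggs and $w$ into the other, putting $e$ into $E_G(A, A^c)$. I would resolve this by exhibiting another egg of $\mathscr{S}'$ lying entirely on one side of $A$ to serve as an alternate witness, letting the case analysis proceed. If no such alternative exists, every egg of $\mathscr{S}'$ must contain $u$, making $\{u\}$ a hitting set of size one, forcing $\vert\vert \mathscr{S}' \vert\vert \leq 1$, in which case the required bound is immediate.
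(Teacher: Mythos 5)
Your forward inequality and the overall structure of the reverse one track the paper's argument (the paper adds $u$ to $E'$ whenever $v \in E$ rather than only when $\{v,w\} \subseteq E$, but this is immaterial, and your separate treatment of $\{u\} \in \mathscr{S}'$ is fine). The genuine gap is in your resolution of what you call the main obstacle. When both pre-image eggs $E_1', E_2'$ contain $u$, you look for an alternate witness, namely an egg of $\mathscr{S}'$ not containing $u$ whose trace lies entirely in $A$ or entirely in $A^c$, and you claim that if none exists then every egg of $\mathscr{S}'$ contains $u$, so $\{u\}$ is a hitting set and $\vert \vert \mathscr{S}' \vert \vert \leq 1$. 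That inference is false: the absence of such a witness only says that every egg whose trace is contained in \emph{one side} of $A$ contains $u$; eggs whose traces meet both $A$ and $A^c$ are unconstrained and need not contain $u$. For instance, let $G$ be the $4$-cycle on $v,w,x,y$, let $G'$ be obtained by subdividing $vw$ at $u$, let $\mathscr{S}' = \{ \{v,u\}, \{u,w\}, \{x,y\} \}$ and $A = \{v,y\}$: the only $u$-free egg straddles $A$ and $A^c$, so no alternate witness exists, yet $\{u\}$ is not a hitting set.

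In that final case no lift $A'$ of $A$ separates two eggs of $\mathscr{S}'$ at all, so condition (2) of Definition~\ref{Def:ScrambleOrder} cannot be used to bound $\vert \vert \mathscr{S}' \vert \vert$; one must switch to condition (1). This is the paper's key move, which your proposal is missing: it constructs a hitting set $C$ for $\mathscr{S}'$ consisting of $u$ together with one endpoint of every edge of $E_G(A,A^c)$ other than the edge $vw$ (which does cross, as you observe), so $\vert C \vert \leq \vert E_G(A,A^c) \vert$. Eggs whose traces lie in $A$ or in $A^c$ are hit by $u$ by the case hypothesis, while a straddling egg is connected and hence contains an edge crossing from $A$ to $A^c$, and therefore a vertex of $C$. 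This yields $\vert \vert \mathscr{S}' \vert \vert \leq \vert C \vert \leq \vert E_G(A,A^c) \vert$ and closes the case; without this (or an equivalent) argument, your proof of $\mathrm{sn}(G') \leq \mathrm{sn}(G)$ does not go through.
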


\begin{proof}
By induction, it suffices to consider the case where $G$ has one fewer vertex than $G'$.  Let $v$ and $w$ be adjacent vertices in $G$, and let $G'$ be the graph obtained by subdividing an edge between $v$ and $w$, introducing a vertex $u$ between them.

First, we will show that $\mathrm{sn} (G) \leq \mathrm{sn} (G')$.  To see this, let $\mathscr{S}$ be a scramble on $G$.  For each egg $E \in \mathscr{S}$, we define a connected subset $E' \subseteq V(G')$ as follows.  If $v \notin E$, then $E' = E$, and if $v \in E$, then $E' = E \cup \{ u \}$.  Let
\[
\mathscr{S}' = \{ E' \vert E \in \mathscr{S} \} .
\]
We will show that $\vert \vert \mathscr{S}' \vert \vert \geq \vert \vert \mathscr{S} \vert \vert$.

Let $C \subseteq V(G')$ be a hitting set for $\mathscr{S}'$.  If $u \notin C$, then $C$ is also a hitting set for $\mathscr{S}$.  On the other hand, if $u \in C$, then since every egg in $\mathscr{S}'$ that contains $u$ also contains $v$, the set $C' = C \cup \{ v \} \smallsetminus \{ u \}$ is a hitting set for $\mathscr{S}'$ with the property that $u \notin C'$ and $\vert C' \vert \leq \vert C \vert$.  Now, let $A$ be a subset of $V(G')$ such that both $A$ and $A^c$ contain eggs of $\mathscr{S}'$.  By exchanging $A$ and $A^c$, we may assume that $u \notin A$.  We may then think of $A$ also as a subset of $V(G)$ with the property that both $A$ and $A^c$ contain eggs of $\mathscr{S}$.  If both $v$ and $w$ are contained in $A$, then the number of edges leaving $A$ in $V(G)$ is 2 fewer than the number of edges leaving $A$ in $V(G')$.  Otherwise, these two numbers are equal.  It follows that $\vert \vert \mathscr{S}' \vert \vert \geq \vert \vert \mathscr{S} \vert \vert$.

We now show that $\mathrm{sn} (G) \geq \mathrm{sn} (G')$.  To see this, let $\mathscr{S}'$ be a scramble on $G'$ of maximal scramble order.  If $\mathrm{sn} (G) = 1$, then by Corollary~\ref{Cor:Tree}, we see that $G$ is a tree.  It follows that $G'$ is a tree as well, and $\mathrm{sn}(G') = 1$ by another application of Corollary~\ref{Cor:Tree}.  We may therefore assume that $\mathrm{sn} (G) \geq 2$.  If $\vert \vert \mathscr{S}' \vert \vert \leq 2$ the result follows, so we assume from here on that $\vert \vert \mathscr{S}' \vert \vert \geq 3$.

If every egg in $\mathscr{S}'$ contains $u$, then $\mathscr{S}'$ has a hitting set of size 1, a contradiction.  It follows that if $\{ u \} \in \mathscr{S}'$, then the set $A = \{ u \}$ has the property that both $A$ and $A^c$ contain eggs of $\mathscr{S}'$.  Thus, $\vert \vert \mathscr{S}' \vert \vert \leq \vert E_{G'} (A,A^c) \vert = 2$, another contradiction.  We may therefore assume that $\{ u \} \notin \mathscr{S}'$.  Let 
\[
\mathscr{S} = \{ E' \cap V(G) \vert E' \in \mathscr{S}' \}.
\]
Note that each element of $\mathscr{S}$ is a connected subset of $V(G)$, so $\mathscr{S}$ is a scramble in $G$.  We will show that $\vert \vert \mathscr{S} \vert \vert \geq \vert \vert \mathscr{S}' \vert \vert$.  First, let $C \subseteq V(G)$ be a hitting set for $\mathscr{S}$.  Since $\{ u \} \notin \mathscr{S}'$, we see that $C$ is also a hitting set for $\mathscr{S}'$, so $\vert C \vert \geq \vert \vert \mathscr{S}' \vert \vert$.

Now, let $A$ be a subset of $V(G)$ with the property that both $A$ and $A^c$ contain eggs of $\mathscr{S}$.  Without loss of generality, assume that $v \in A$.  By considering several cases, we will show that $\vert \vert \mathscr{S}' \vert \vert \leq \vert E_G (A,A^c) \vert$.  First, assume that there is an egg $E' \in \mathscr{S}'$ such that $E' \cap V(G) \subseteq A^c$ and $u \notin E'$.  In this case, let $A' = A \cup \{ u \}$.  We see that $A'$ contains an egg in $\mathscr{S}'$, $A'^c$ contains the egg $E'$, and $\vert E_G (A,A^c) \vert = \vert E_{G'}(A',A'^c) \vert$.  It follows that $\vert \vert \mathscr{S}' \vert \vert \leq \vert E_G (A,A^c) \vert$.  Conversely, if there is an egg $E' \in \mathscr{S}'$ such that $E' \cap V(G) \subseteq A^c$ and $u \in E'$, then since eggs are connected and $E' \neq \{ u \}$, we have $w \in E' \cap V(G) \subseteq A^c$.  In this case, if there is an egg $E' \in \mathscr{S}'$ such that $E' \cap V(G) \subseteq A$ and $u \notin E'$, then by a similar argument setting $A' = A$, we see that $\vert \vert \mathscr{S}' \vert \vert \leq \vert E_G (A,A^c) \vert$.

Finally, consider the case where every egg $E' \in \mathscr{S}'$ such that $E' \cap V(G)$ is contained in either $A$ or $A^c$ contains $u$.  As above, this implies that the edge between $v$ and $w$ must be in $E_G (A,A^c)$.  We will construct a hitting set $C$ for $\mathscr{S}'$ of size $\vert C \vert \leq \vert E_G (A,A^c) \vert$, thus showing that $\vert \vert \mathscr{S}' \vert \vert \leq \vert E_G (A,A^c) \vert$.  To construct $C$, let $u \in C$, and, for every edge in $E_G (A,A^c)$ other than the edge between $v$ and $w$, choose one of its endpoints to be in $C$.  Clearly, $\vert C \vert \leq \vert E_G (A,A^c) \vert$.  To see that $C$ is a hitting set, let $E' \in \mathscr{S}'$ be an egg.  If $E' \cap V(G)$ is contained in either $A$ or $A^c$, then $u \in E' \cap C$ by assumption.  On the other hand, if $E' \cap V(G)$ is contained in neither $A$ nor $A^c$, then since $E'$ is connected, the set $E_{G'}(E' \cap A, E' \cap A^c)$ is nonempty.   Since $C$ contains an endpoint of every edge in this set, it follows that $E' \cap C$ is nonempty.
\end{proof}

\begin{example}
\label{Ex:Hyp}
The graph on the left in Figure~\ref{Fig:Refinement} has gonality 2.  By Theorem~\ref{Thm:Bound}, its scramble number is at most 2.  Since it is not a tree, by Corollary~\ref{Cor:Tree}, its scramble number is exactly 2.

On the other hand, the graph on the right has gonality 3.  Since it is a subdivision of the graph on the left, however, by Proposition~\ref{Prop:Refinement} the two graphs have the same scramble number.  Thus, the graph on the right is an example where the gonality and scramble number disagree.

\begin{figure}[h]

\begin{tikzpicture}
\filldraw (0,1) circle (2pt);
\filldraw (1,0) circle (2pt);
\filldraw (1,2) circle (2pt);
\filldraw (2,0) circle (2pt);
\filldraw (2,2) circle (2pt);
\filldraw (3,1) circle (2pt);
\draw (0,1)--(1,0);
\draw (0,1)--(1,2);
\draw (1,0)--(1,1);
\draw (1,0)--(2,0);
\draw (1,1)--(1,2);
\draw (1,2)--(2,2);
\draw (2,0)--(2,1);
\draw (2,0)--(3,1);
\draw (2,1)--(2,2);
\draw (2,2)--(3,1);

\filldraw (4,1) circle (2pt);
\filldraw (5,0) circle (2pt);
\filldraw (5,2) circle (2pt);
\filldraw (6,0) circle (2pt);
\filldraw (6,2) circle (2pt);
\filldraw (7,1) circle (2pt);
\filldraw (5.5,0) circle (2pt);
\draw (4,1)--(5,0);
\draw (4,1)--(5,2);
\draw (5,0)--(5,1);
\draw (5,0)--(6,0);
\draw (5,1)--(5,2);
\draw (5,2)--(6,2);
\draw (6,0)--(6,1);
\draw (6,0)--(7,1);
\draw (6,1)--(6,2);
\draw (6,2)--(7,1);
\end{tikzpicture}
\caption{Two graphs with the same scramble number, but different gonalities}
\label{Fig:Refinement}
\end{figure}
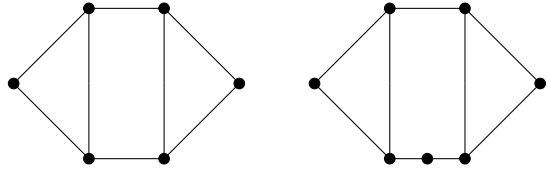
\end{example}

The following examples, suggested by the anonymous referee, show that $\mathrm{sn}(G)$ is unbounded in $\mathrm{tw}(G)$, and $\mathrm{gon}(G)$ is unbounded in $\mathrm{sn}(G)$.

\begin{example}
\label{Ex:Path}
In this example, we construct a family of graphs, all with treewidth 2, that have increasingly large scramble number.
For $k \geq 2$, let $G_k$ be the graph obtained from the path graph $P_k$ on $k$ vertices by replacing every edge with $k$ parallel edges.  For example, $G_3$ is pictured in Figure~\ref{Fig:SP}.  The treewidth of a graph with parallel edges is equal to that of the underlying simple graph.  Because $G_k$ is obtained from a path graph by introducing parallel edges, we have $\mathrm{tw} (G_k) = 1$.  On the other hand, we will see that $\mathrm{sn} (G_k) = k$.  To see this, note that $\mathrm{gon} (G_k) = k$, so it suffices to construct a scramble $\mathscr{S}$ of order $k$.  Let $\mathscr{S}$ be the scramble whose eggs are the individual vertices of $G_k$.  Since the eggs are disjoint, a minimal hitting set has size $k$.  If $A \subsetneq V(G_k)$ is a non-empty subset, then $\vert E(A,A^c) \vert \geq k$.  It follows that $\vert \vert \mathscr{S} \vert \vert = k$.
\begin{figure}[h]
\scalebox{1.5}{
\begin{tikzpicture}
\draw (0,0) circle (0.5);
\draw (1,0) circle (0.5);
\draw (-0.5,0)--(0.5,0);
\draw (0.5,0)--(1.5,0);
\draw [ball color=black] (-0.5,0) circle (0.55mm);
\draw [ball color=black] (0.5,0) circle (0.55mm);
\draw [ball color=black] (1.5,0) circle (0.55mm);
\end{tikzpicture}
}
\caption{The graph $G_3$.}
\label{Fig:SP}
\end{figure}
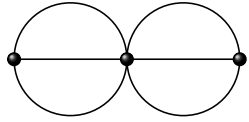

\end{example}

\begin{example}
\label{Ex:COL}
In this example we construct a family of graphs, all with scramble number 2, that have increasingly large gonality.
Let $G_k$ be the graph obtained from the path graph $P_k$ on $k$ vertices by replacing every edge with 2 parallel edges.  Since $G_k$ is not a tree, we have
\[
2 \leq \mathrm{sn} (G_k) \leq \mathrm{gon} (G_k) = 2,
\]
hence $\mathrm{sn} (G_k) = 2$.  Now, let $G'_k$ be the graph obtained from $G_k$ by subdividing one edge from each pair $k-1$ times.  An example of such a graph appears in Figure~\ref{Fig:COL}.  The graph $G'_k$ is a chain of $k-1$ loops, suitably chosen to be Brill-Noether general by \cite{CDPR}.  In particular, $\mathrm{gon}(G'_k) = \lceil \frac{k+2}{2} \rceil$.  By Proposition~\ref{Prop:Refinement}, however, we have $\mathrm{sn} (G'_k) = \mathrm{sn} (G_k) = 2$.  

\begin{figure}[h]
\scalebox{1.5}{
\begin{tikzpicture}
\draw (0,0) circle (0.5);
\draw (1,0) circle (0.5);
\draw (2,0) circle (0.5);
\draw [ball color=black] (-0.5,0) circle (0.55mm);
\draw [ball color=black] (-0.35,0.35) circle (0.55mm);
\draw [ball color=black] (0,0.5) circle (0.55mm);
\draw [ball color=black] (0.35,0.35) circle (0.55mm);
\draw [ball color=black] (0.5,0) circle (0.55mm);
\draw [ball color=black] (0.65,0.35) circle (0.55mm);
\draw [ball color=black] (1,0.5) circle (0.55mm);
\draw [ball color=black] (1.35,0.35) circle (0.55mm);
\draw [ball color=black] (1.5,0) circle (0.55mm);
\draw [ball color=black] (1.65,0.35) circle (0.55mm);
\draw [ball color=black] (2,0.5) circle (0.55mm);
\draw [ball color=black] (2.35,0.35) circle (0.55mm);
\draw [ball color=black] (2.5,0) circle (0.55mm);
\end{tikzpicture}
}
\caption{The graph $G'_4$.}
\label{Fig:COL}
\end{figure}
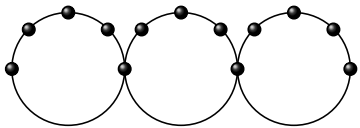
\end{example}

\section{Examples}
\label{Sec:Examples}

In this section, we compute the scramble numbers and gonalities of several well-known families of graphs.  Our hope is that these examples illustrate the advantages of the scramble number as a tool for computing gonality, as our constructions are relatively simple in comparison to the preexisting literature.

Our examples all arise as Cartesian products of graphs.  Recall that the Cartesian product of two graphs $G_1$ and $G_2$, denoted $G_1 \mathbin{\square} G_2$, is the graph with vertex set $V(G_1) \times V(G_2)$ and an edge between $(u_1, u_2)$ and $(v_1, v_2)$ if either $u_1 = v_1$ and there is an edge between $u_2$ and $v_2$, or $u_2 = v_2$ and there is an edge between $u_1$ and $v_1$.  For a fixed vertex $v \in G_1$, we refer to the set
\[
C_v = \Big\{ (v,w) \in V(G_1 \mathbin{\square} G_2) \vert w \in G_2 \Big\}
\]
as a \emph{column}.  Similarly, for $w \in G_2$, we refer to the set
\[
R_w = \Big\{ (v,w) \in V(G_1 \mathbin{\square} G_2) \vert v \in G_1 \Big\}
\]
as a \emph{row}.  A bound on the gonality of Cartesian products can be found in \cite{AidunMorrison}.

\begin{proposition}\cite[Proposition~3]{AidunMorrison}
\label{Prop:ProductBound}
For any two graphs $G_1$ and $G_2$,
\[
\mathrm{gon}(G_1 \mathbin{\square} G_2) \leq \min \Big\{ \mathrm{gon}(G_1) \vert V(G_2) \vert, \mathrm{gon}(G_2) \vert V(G_1) \vert \Big\} .
\]
\end{proposition}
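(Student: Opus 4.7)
The plan is to produce, from a positive-rank divisor on $G_1$ of minimum degree, a positive-rank divisor on $G_1 \mathbin{\square} G_2$ by replicating the same chip configuration on every row of the product. Concretely, if $D$ is a divisor of positive rank on $G_1$ with $\mathrm{deg}(D) = \mathrm{gon}(G_1)$, I define $\tilde{D}$ on $G_1 \mathbin{\square} G_2$ by $\tilde{D}(v,w) = D(v)$ for every $(v,w)$. Then $\tilde{D}$ is effective with $\mathrm{deg}(\tilde{D}) = \mathrm{gon}(G_1)\cdot|V(G_2)|$, so the content is to show $\tilde{D}$ has positive rank; interchanging the roles of $G_1$ and $G_2$ then yields the other half of the minimum.

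The key compatibility is that chip-firing on $G_1$ lifts cleanly to $G_1 \mathbin{\square} G_2$ via unions of columns: for $A \subseteq V(G_1)$, firing $\tilde{A} := \bigcup_{v \in A} C_v$ on a row-uniform divisor $\tilde{D}$ produces exactly the row-uniform lift of the divisor obtained by firing $A$ on $D$. Indeed, along a row edge $(v,w)(v',w)$, exactly one endpoint lies in $\tilde{A}$ iff exactly one of $v, v'$ lies in $A$, matching the firing of $A$ on $G_1$; along a column edge $(v,w)(v,w')$, both endpoints lie in $\tilde{A}$ (if $v \in A$) or neither does (if $v \notin A$), so no net chip flow occurs.

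To finish, fix an arbitrary target vertex $(v_0, w_0) \in V(G_1 \mathbin{\square} G_2)$. Since $D$ has positive rank on $G_1$, there is a chain of firings $A_1, \ldots, A_k$ carrying $D$ to an effective $D'$ with $v_0 \in \mathrm{Supp}(D')$. By the compatibility above, the parallel chain of firings $\tilde{A}_1, \ldots, \tilde{A}_k$ on $G_1 \mathbin{\square} G_2$ carries $\tilde{D}$ to the row-uniform lift $\tilde{D}'$ of $D'$, which is effective and contains $(v_0, w_0)$ in its support. Hence $\tilde{D}$ has positive rank. The only real bookkeeping is the column-edge cancellation that makes the row-by-row simulation work; there is no deeper obstacle.
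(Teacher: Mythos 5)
Your argument is correct: the row-uniform lift $\tilde{D}$ of a positive-rank divisor $D$ on $G_1$ interacts with set-firings exactly as you describe (column edges never cross the fired set $\bigcup_{v\in A}C_v$, row edges cross precisely when the corresponding $G_1$-edge does), so equivalences on $G_1$ lift to equivalences on $G_1\mathbin{\square}G_2$ and positive rank follows, with the standard fact that positive rank is equivalent to reaching, for each vertex, an effective equivalent divisor supported there. Note that the paper itself gives no proof of this proposition -- it is quoted from Aidun--Morrison -- and your lifting construction is essentially the argument given in that cited source.
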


We provide several examples where this bound is achieved.  It is a standard result that the $m \times n$ grid graph has treewidth $\min \{ m,n \}$, and it is shown in \cite{dbG} that such graphs have gonality $\min \{ m,n \}$ as well.  A grid graph is an example of the product of two trees, a family of graphs whose gonality is computed in \cite{AidunMorrison}.  We reproduce this result here using the scramble number.

\begin{proposition}\cite[Proposition~11]{AidunMorrison}
If $T_1$ and $T_2$ are trees, then
\[
\mathrm{gon} (T_1 \mathbin{\square} T_2) = \mathrm{sn} (T_1 \mathbin{\square} T_2) = \min \Big\{ \vert V(T_1) \vert, \vert V(T_2) \vert \Big\} .
\]
\end{proposition}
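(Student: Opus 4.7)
The plan is to sandwich both invariants between matching lower and upper bounds equal to $\min\{|V(T_1)|, |V(T_2)|\}$, combining Proposition~\ref{Prop:ProductBound} on the gonality side with an explicit scramble on the scramble-number side, tied together by Theorem~\ref{Thm:Bound}. Without loss of generality I assume $|V(T_1)| \leq |V(T_2)|$ and set $m = |V(T_1)|$, $n = |V(T_2)|$.

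For the gonality upper bound, I would apply Proposition~\ref{Prop:ProductBound} directly: since any single vertex of a tree is a divisor of positive rank, $\mathrm{gon}(T_i) = 1$, and the bound collapses to $\min\{|V(T_2)|, |V(T_1)|\} = m$.

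For the scramble number lower bound, I would produce the scramble $\mathscr{S} = \{ C_v : v \in V(T_1) \}$ whose eggs are the $m$ columns of the product. Each column $C_v$ induces a subgraph isomorphic to $T_2$ and is therefore connected. Since the columns partition $V(T_1 \mathbin{\square} T_2)$, any hitting set must use a distinct vertex for each egg, so no hitting set has size less than $m$; this verifies condition (1) of Definition~\ref{Def:ScrambleOrder} at level $k = m$. For condition (2), suppose $A \subseteq V(T_1 \mathbin{\square} T_2)$ contains $C_v$ while $A^c$ contains $C_{v'}$ for some distinct $v, v' \in V(T_1)$. For each $w \in V(T_2)$, the row $R_w$ meets $C_v$ at $(v,w) \in A$ and meets $C_{v'}$ at $(v',w) \in A^c$, so both $A \cap R_w$ and $A^c \cap R_w$ are nonempty. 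Because $R_w$ induces a copy of the connected graph $T_1$, the cut within this row contributes at least one edge to $E_G(A, A^c)$. Summing across the $n$ rows gives $\vert E(A, A^c) \vert \geq n \geq m$, hence $\vert\vert \mathscr{S} \vert\vert \geq m$, and so $\mathrm{sn}(T_1 \mathbin{\square} T_2) \geq m$. Chaining with Theorem~\ref{Thm:Bound} yields $m \leq \mathrm{sn}(T_1 \mathbin{\square} T_2) \leq \mathrm{gon}(T_1 \mathbin{\square} T_2) \leq m$, which is the claim.

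The only step that requires any thought is the verification of condition (2); it reduces to the observation that each row of the product is a connected copy of $T_1$, so every row whose vertices straddle the cut must contribute at least one crossing edge, and there are enough rows ($n \geq m$) to supply the required total. No real obstacle is expected, and the argument mirrors the textbook treewidth calculation for grid graphs while remaining entirely within the scramble framework.
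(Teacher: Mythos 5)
Your proposal is correct and follows essentially the same route as the paper: the gonality upper bound comes from Proposition~\ref{Prop:ProductBound} (using that trees have gonality $1$), and the lower bound comes from the scramble whose eggs are the columns, with disjointness handling the hitting-set condition and the row-straddling argument handling the cut condition. The only cosmetic difference is your WLOG normalization and that the paper also verifies the matching upper bound on the scramble order, which is not needed for the stated equality.
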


\begin{proof}
By Proposition~\ref{Prop:ProductBound}, the gonality of $T_1 \mathbin{\square} T_2$ is at most $\min \{ \vert V(T_1) \vert, \vert V(T_2) \vert \}$.  We therefore seek to bound the gonality from below.  By Theorem~\ref{Thm:Bound}, it suffices to construct a scramble of scramble order $\min \{ \vert V(T_1) \vert, \vert V(T_2) \vert \}$.

Let $\mathscr{S}$ be the set of columns in $T_1 \mathbin{\square} T_2$.  Any row $R_w$ is a hitting set for $\mathscr{S}$, and $\vert R_w \vert = \vert V(T_1) \vert$.  Moreover, if $v \in T_1$ is a leaf, then $\vert E( C_v , C_v^c ) \vert = \vert V(T_2) \vert$.  It follows that
\[
\vert \vert \mathscr{S} \vert \vert \leq \min \Big\{ \vert V(T_1) \vert, \vert V(T_2) \vert \Big\} .
\]

Since the number of columns is $\vert V(T_1) \vert$ and they are disjoint, there is no hitting set of size less than $\vert V(T_1) \vert$.  Now, let $A$ be a subset of $V(T_1 \mathbin{\square} T_2)$ with the property that both $A$ and $A^c$ contain a column.  Then every row of $T_1 \mathbin{\square} T_2$ contains a vertex in $A$ and a vertex in $A^c$, so every row contains an edge in $E(A,A^c)$.  It follows that $\vert E(A,A^c) \vert$ is greater than or equal to the number of rows, which is $\vert V(T_2) \vert$, hence
\[
\vert \vert \mathscr{S} \vert \vert \geq \min \Big\{ \vert V(T_1) \vert, \vert V(T_2) \vert \Big\} .
\]
\end{proof}

In \cite{Morrison}, the authors compute the treewidth of the \emph{stacked prism graphs} $Y_{m,n}$, the product of a cycle with $m$ vertices and a path with $n$ vertices.  They show that the gonality of $Y_{m,n}$ is equal to its treewdith, except in the special case where $m = 2n$.  We prove the following generalization, which holds even in this special case.  Even in the cases where the gonality has been previously computed, we believe that our constructions, using scrambles rather than brambles, are much simpler.  For this reason, we have treated these graphs for all $m$ and $n$ uniformly.

\begin{proposition}
\label{Prop:StackedPrism}
If $C$ is a cycle and $T$ is a tree, then
\[
\mathrm{gon} (C \mathbin{\square} T) = \mathrm{sn} (C \mathbin{\square} T) = \min \Big\{ \vert V(C) \vert, 2 \vert V(T) \vert \Big\} .
\]
\end{proposition}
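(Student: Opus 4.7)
The plan is to mirror the preceding $T_1 \mathbin{\square} T_2$ proof, chasing the chain $\min\{m,2n\} \leq \mathrm{sn}(C \mathbin{\square} T) \leq \mathrm{gon}(C \mathbin{\square} T) \leq \min\{m,2n\}$, where $m = |V(C)|$ and $n = |V(T)|$. The rightmost inequality is immediate from Proposition~\ref{Prop:ProductBound} since $\mathrm{gon}(C) = 2$ and $\mathrm{gon}(T) = 1$, and the middle inequality is Theorem~\ref{Thm:Bound}. So the real content is constructing a scramble on $C \mathbin{\square} T$ of order at least $\min\{m,2n\}$.

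For that scramble I would take $\mathscr{S} = \{C_v : v \in V(C)\}$, the set of all columns. Each column is a copy of $T$, hence connected, so $\mathscr{S}$ is a valid scramble. The $m$ columns are pairwise disjoint, so no hitting set has size less than $m$, verifying clause (1) of Definition~\ref{Def:ScrambleOrder}. For clause (2), suppose $A \subseteq V(C \mathbin{\square} T)$ with $C_{v_1} \subseteq A$ and $C_{v_2} \subseteq A^c$ for some distinct $v_1, v_2 \in V(C)$. For every row $R_w$, the set $A \cap R_w$ contains $(v_1,w)$ while $A^c \cap R_w$ contains $(v_2,w)$, so $A \cap R_w$ is a proper, nonempty subset of the cycle $R_w$. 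Any such subset has at least two boundary edges inside $R_w$. Summing over the $n$ rows gives $\vert E(A,A^c) \vert \geq 2n$, and combined with the hitting-set bound this yields $\vert \vert \mathscr{S} \vert \vert \geq \min\{m,2n\}$.

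There is no significant obstacle here: the one computation, namely the cycle-isoperimetric lower bound of two boundary edges, is standard. What is noteworthy is that this construction handles all ratios of $m$ to $n$ uniformly, without needing to separate out the difficult case $m = 2n$ treated in \cite{Morrison} for the stacked prism graphs $Y_{m,n}$: the single scramble of columns simultaneously witnesses the bound $m$ via disjointness and the bound $2n$ via row cycles, and the $\min$ arises automatically from Definition~\ref{Def:ScrambleOrder}.
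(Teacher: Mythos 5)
Your proposal is correct and follows essentially the same route as the paper: the upper bound from Proposition~\ref{Prop:ProductBound}, the lower bound via Theorem~\ref{Thm:Bound} using the scramble of columns, with disjointness giving the hitting-set bound and the two boundary edges in each row-cycle giving $\vert E(A,A^c)\vert \geq 2\vert V(T)\vert$. The only cosmetic difference is that you skip the (unneeded) verification that $\vert\vert\mathscr{S}\vert\vert \leq \min\{\vert V(C)\vert, 2\vert V(T)\vert\}$, letting the chain of inequalities force all the equalities.
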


\begin{proof}
By Proposition~\ref{Prop:ProductBound}, we have $\mathrm{gon} (C \mathbin{\square} T) \leq \min \{ \vert V(C) \vert, 2\vert V(T) \vert \}$.  We now compute a lower bound.  By Theorem~\ref{Thm:Bound}, it suffices to construct a scramble of scramble order $\min \{ \vert V(C) \vert, 2 \vert V(T) \vert \}$.

Again, we let $\mathscr{S}$ be the set of columns in $C \mathbin{\square} T$.  (See, for example, Figure~\ref{Fig:StackedPrism}.)  Any row $R_w$ is a hitting set for $\mathscr{S}$, and $\vert R_w \vert = \vert V(C) \vert$.  Moreover, for any $v \in C$ we have $\vert E( C_v , C_v^c ) \vert = 2 \vert V(T) \vert$.  It follows that
\[
\vert \vert \mathscr{S} \vert \vert \leq \min \Big\{ \vert V(C) \vert, 2 \vert V(T) \vert \Big\} .
\]

Since the number of columns is $\vert V(C) \vert$ and they are disjoint, there is no hitting set of size less than $\vert V(C) \vert$.  Now, let $A$ be a subset of $V(C \mathbin{\square} T)$ with the property that both $A$ and $A^c$ contain a column.  Then every row of $C \mathbin{\square} T$ contains a vertex in $A$ and a vertex in $A^c$, so every row contains at least two edges in $E(A,A^c)$.  It follows that $\vert E(A,A^c) \vert$ is greater than or equal to twice the number of rows, which is $\vert V(T) \vert$, hence
\[
\vert \vert \mathscr{S} \vert \vert \geq \min \Big\{ \vert V(C) \vert, 2 \vert V(T) \vert \Big\} .
\]
\end{proof}

\begin{figure}[h]

\begin{tikzpicture}
\draw (0,1) circle (3pt);
\draw (1,0) circle (3pt);
\draw (1,1) circle (3pt);
\draw (0,0) circle (3pt);
\draw (2,2) circle (3pt);
\draw (2,-1) circle (3pt);
\draw (-1,2) circle (3pt);
\draw (-1,-1) circle (3pt);
\draw (0,0)--(1,0);
\draw (0,0)--(0,1);
\draw (0,1)--(1,1);
\draw (1,0)--(1,1);
\draw (2,2)--(2,-1);
\draw (2,2)--(-1,2);
\draw (-1,-1)--(2,-1);
\draw (-1,-1)--(-1,2);
\draw (-1,-1)--(0,0);
\draw (2,-1)--(1,0);
\draw (-1,2)--(0,1);
\draw (2,2)--(1,1);
\filldraw[green] (0,1) circle (3pt);
\draw (0,1.3) node{G};
\filldraw[red] (1,0) circle (3pt);
\draw (1,-0.3) node{R};
\filldraw[blue] (1,1) circle (3pt);
\draw (1,1.3) node{B};
\filldraw[yellow] (0,0) circle (3pt);
\draw (0,-0.3) node{Y};
\filldraw[blue] (2,2) circle (3pt);
\draw (2,2.3) node{B};
\filldraw[red] (2,-1) circle (3pt);
\draw (2,-1.3) node{R};
\filldraw[green] (-1,2) circle (3pt);
\draw (-1,2.3) node{G};
\filldraw[yellow] (-1,-1) circle (3pt);
\draw (-1,-1.3) node{Y};
\end{tikzpicture}
\caption{The stacked prism graph $Y_{4,2}$ with a scramble of scramble order 4.  Note that, by \cite[Proposition~3.3]{Morrison}, the treewidth of $Y_{4,2}$ is only 3.}
\label{Fig:StackedPrism}
\end{figure}
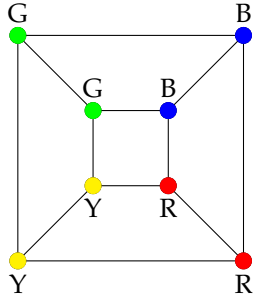

Note that in the special case where $m = 2n$, Proposition~\ref{Prop:StackedPrism} shows that the scramble number of the stacked prism graph $Y_{m,n}$ can be strictly greater than the treewidth.  In \cite{Morrison}, the authors also compute the treewidth of the \emph{toroidal grid graphs} $T_{m,n}$, the product of a cycle with $m$ vertices and a cycle with $n$ vertices.  They further show that the gonality of $T_{m,n}$ is equal to its treewidth, except in the special cases where $m = n $ or $m = n \pm 1$.  As with the stacked prism graphs, we compute the gonality of these graphs for all $m$ and $n$ uniformly, including the cases not covered in \cite{Morrison}.

\begin{proposition}
For all $m,n \geq 2$, we have
\[
\mathrm{gon}(T_{m,n}) = \mathrm{sn}(T_{m,n}) = \min \{ 2m, 2n \} .
\]
\end{proposition}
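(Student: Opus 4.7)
The plan is to mirror the template of Proposition~\ref{Prop:StackedPrism}. The upper bound $\mathrm{gon}(T_{m,n}) \leq \min\{2m,2n\}$ is immediate from Proposition~\ref{Prop:ProductBound}, since $\mathrm{gon}(C_k) = 2$ for any cycle. By Theorem~\ref{Thm:Bound}, it then suffices to produce a scramble of order $\min\{2m,2n\}$.

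The naive scramble whose eggs are the columns, as in Proposition~\ref{Prop:StackedPrism}, gives only order $\min\{m, 2n\}$, since a single row is already a hitting set. I would instead take $\mathscr{S}$ to be the collection of all \emph{punctured} rows $R_w \setminus \{x\}$ for $x \in R_w$, together with all punctured columns $C_v \setminus \{x\}$ for $x \in C_v$. Each such set is a path inside a cycle of length $m$ or $n$, hence connected. For the hitting-set bound, if $C \cap C_v = \{x_0\}$ were a singleton, then the egg $C_v \setminus \{x_0\}$ would be unhit; hence $\vert C \cap C_v \vert \geq 2$ for every column, giving $\vert C \vert \geq 2m$. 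Symmetrically $\vert C \vert \geq 2n$, so every hitting set has size at least $\max\{2m,2n\} \geq \min\{2m,2n\}$.

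For the cut condition, fix $A$ with $E \subseteq A$ and $E' \subseteq A^c$ for eggs $E, E' \in \mathscr{S}$. When both are punctured rows, the minimal choice $A = E = R_w \setminus \{x\}$ has $\vert E(A,A^c) \vert = 4(m-1) - 2(m-2) = 2m$, since $A$ is a path of $m-1$ vertices with $m-2$ internal edges in the $4$-regular graph $T_{m,n}$. Moreover every vertex outside $R_w$ has at most one neighbor in $A$, so by a standard exchange argument, adding any vertex to $A$ can only preserve or increase the cut. The column--column case gives $2n$ by the analogous calculation. In the mixed case, a punctured row $R_w \setminus \{x\}$ and a punctured column $C_v \setminus \{y\}$ share the single vertex of $R_w \cap C_v$, so $E \subseteq A$ and $E' \subseteq A^c$ is possible only when this shared vertex equals $x$ or $y$; in each such subcase the minimal $A$ is again either a punctured row or a punctured column, so the cut is at least $\min\{2m,2n\}$. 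Combining, $\vert\vert \mathscr{S} \vert\vert \geq \min\{2m,2n\}$, which matches the upper bound and establishes $\mathrm{gon}(T_{m,n}) = \mathrm{sn}(T_{m,n}) = \min\{2m,2n\}$.

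The main obstacle is the mixed cut case: one must verify which configurations of excluded vertices allow $E \subseteq A$ and $E' \subseteq A^c$ simultaneously, and confirm that expanding the minimal $A$ beyond the egg itself never decreases the cut. Once this bookkeeping is dispatched, the rest of the argument is the same exchange-style reasoning used in Proposition~\ref{Prop:StackedPrism}.
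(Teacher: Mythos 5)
Your outline matches the paper's strategy (upper bound from Proposition~\ref{Prop:ProductBound}, lower bound by exhibiting a scramble of order $\min\{2m,2n\}$ and invoking Theorem~\ref{Thm:Bound}), and your choice of eggs is close to but not the same as the paper's: the paper uses \emph{only} the punctured columns $C_v \smallsetminus \{(v,w)\}$, while you add the punctured rows as well. The genuine gap is in your verification of condition (2) of Definition~\ref{Def:ScrambleOrder}. The ``standard exchange argument'' --- that because each vertex outside $R_w$ has at most one neighbor in the initial set, adding vertices to $A$ can only preserve or increase the cut --- is false: the hypothesis about neighbors holds only for the initial $A = R_w\smallsetminus\{x\}$, and once $A$ has grown, added vertices can have three or four neighbors inside $A$. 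Concretely, start from a punctured row (cut $2m$) and enlarge $A$ to the complement of a single column (still containing the punctured row, and with $A^c$ containing punctured columns): the cut drops to $2n$, strictly below $2m$ when $n<m$. So neither the row--row case nor the mixed case is established by computing the cut of the ``minimal $A$'' and appealing to monotonicity; what is actually needed is the edge-counting argument over all admissible $A$ --- count the rows and columns that meet both $A$ and $A^c$ (each contributing at least two cut edges), and recover the deficit of $2$ or $4$ edges in the cases where a whole row lies in $A$ or a whole column lies in $A^c$. This is exactly the bookkeeping the paper carries out, and it is the mathematical heart of the proof, not a routine afterthought; moreover your mixed row/column case adds a third such analysis that you have not supplied.

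A simplification worth noting: the punctured rows are unnecessary. With only the punctured columns as eggs, any hitting set must contain at least two vertices of every column (else the column punctured at the unique hit vertex is missed), so hitting sets already have size at least $2m \geq \min\{2m,2n\}$; condition (2) then involves only the column--column case, where the count of split rows gives at least $2n-4$, $2n-2$, or $2n$ edges according to whether $R_{w'} \subseteq A$ and/or $R_w \subseteq A^c$, and in the deficient cases the vertical edges at the punctured vertices restore the total to at least $2n$. Dropping the row eggs would eliminate your mixed case entirely and reduce the remaining work to one careful count.
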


\begin{proof}
By Proposition~\ref{Prop:ProductBound}, $\mathrm{gon}(T_{m,n}) \leq \min \{ 2m, 2n \}$, so we will compute a lower bound.  By Theorem~\ref{Thm:Bound}, it suffices to construct a scramble of scramble order $\min \{ 2m, 2n \}$.  Let
\[
\mathscr{S} = \Big\{ C_v \smallsetminus \{ (v,w) \} \mid (v,w) \in V(T_{m,n}) \Big\}
\]
be the set of columns in $T_{m,n}$ with one vertex removed.  (See, for example, Figure~\ref{Fig:Torus}.)  The union of any two rows is a hitting set for $\mathscr{S}$ of size $2m$.  Moreover, for any vertex $v$ in the cycle of length $m$, we see that both $C_v$ and $C_v^c$ contain an egg, and we have $\vert E( C_v , C_v^c ) \vert = 2n$.  It follows that
\[
\vert \vert \mathscr{S} \vert \vert \leq \min \{ 2m, 2n \} .
\]

If $C$ is a subset of the vertices of size less than $2m$, then some column contains at most 1 vertex of $C$, hence $C$ is not a hitting set for $\mathscr{S}$.  Now, let $A$ be a subset of $V(T_{m,n})$ with the property that both $A$ and $A^c$ contain eggs.  Specifically, suppose that $A$ contains every vertex in column $C_v$ except for possibly $(v,w)$, and that $A^c$ contains every vertex in column $C_{v'}$ except for possibly $(v',w')$.  Note that, if $n \geq 3$, then we must have $v \neq v'$.  If $n=2$ and $v=v'$, then the $4=2n$ edges in column $C_v$ are all contained in $E(A,A^c)$.  We now assume that $v \neq v'$, and consider the rows of $T_{m,n}$.  Note that the only row that may be contained in $A$ is $R_{w'}$, and the only row that may be contained in $A^c$ is $R_w$.  If neither $A$ nor $A^c$ contains the row $R_{w''}$, then at least two edges in $R_{w''}$ are contained in $E(A,A^c)$.  We therefore see that the number of row-edges in $E(A,A^c)$ is at least:
\[
\left\{ \begin{array}{ll}
2n-4 & \textrm{if $R_{w'} \subseteq A$ and $R_w \subseteq A^c$} \\
2n-2 & \textrm{if one of $R_{w'} \subseteq A$ or $R_w \subseteq A^c$} \\
2n & \textrm{otherwise.}
\end{array} \right.
\]

We now consider column-edges.  If $R_w \subseteq A^c$, then since $A$ contains $C_v \smallsetminus \{ (v,w) \}$, we see that the two edges in column $C_v$ with endpoints $(v,w)$ are contained in $E(A,A^c)$.  Similarly, if $R_{w'} \subseteq A$, then the two edges in column $C_{v'}$ with endpoints $(v',w')$ are contained in $E(A,A^c)$.  It follows that $\vert E(A,A^c) \vert \geq 2n$, hence
\[
\vert \vert \mathscr{S} \vert \vert \geq \min \{ 2m , 2n \} .
\]
\end{proof}

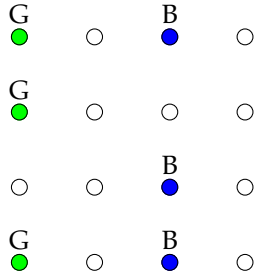
\begin{figure}[h]

\begin{tikzpicture}
\filldraw[green] (0,0) circle (3pt);
\draw (0,0.3) node{G};
\filldraw[green] (0,2) circle (3pt);
\draw (0,2.3) node{G};
\filldraw[green] (0,3) circle (3pt);
\draw (0,3.3) node{G};
\filldraw[blue] (2,0) circle (3pt);
\draw (2,0.3) node{B};
\filldraw[blue] (2,1) circle (3pt);
\draw (2,1.3) node{B};
\filldraw[blue] (2,3) circle (3pt);
\draw (2,3.3) node{B};
\draw (0,0) circle (3pt);
\draw (0,1) circle (3pt);
\draw (0,2) circle (3pt);
\draw (0,3) circle (3pt);
\draw (1,0) circle (3pt);
\draw (1,1) circle (3pt);
\draw (1,2) circle (3pt);
\draw (1,3) circle (3pt);
\draw (2,0) circle (3pt);
\draw (2,1) circle (3pt);
\draw (2,2) circle (3pt);
\draw (2,3) circle (3pt);
\draw (3,0) circle (3pt);
\draw (3,1) circle (3pt);
\draw (3,2) circle (3pt);
\draw (3,3) circle (3pt);

\end{tikzpicture}
\caption{Two representative eggs in $T_{4,4}$.}
\label{Fig:Torus}
\end{figure}

\bibliographystyle{alpha}
\bibliography{ref}

\end{document}